\newtheorem{lemma}{Lemma}
\newtheorem{proposition}{Proposition}
\newcommand{\tr}{\text{{Tr}}}
\newcommand{\col}{\text{{Col}}}
\newcommand{\vect}{\text{{vec}}}
\newcommand{\diag}{\text{{diag}}}
\def\onesvec{{\bf 1}}
\def\diag{\text{{diag}}}
\title{\LARGE \bf
Efficient Stabilization of Hybrid Coulomb Spacecraft Formations using Control Lyapunov Functions
}
\author{Adam M. Tahir % <-this % stops a space
% <-this % stops a space
\thanks{A. M. Tahir is with the William E. Boeing Department of Aeronautics and Astronautics, University of Washington, Seattle, WA, USA. Email: 
       {{\tt\small aerotahir@gmail.com}.}
}}
\begin{document}

\maketitle
\thispagestyle{empty}
\pagestyle{empty}

%%%%%%%%%%%%%%%%%%%%%%%%%%%%%%%%%%%%%%%%%%%%%%%%%%%%%%%%%%%%%%%%%%%%%%%%%%%%%%%%
\begin{abstract}
A control allocation algorithm using control Lyapunov functions to determine stabilizing charges and thrusts of hybrid Coulomb spacecraft formations (HCSFs) is presented. The goal is to stabilize a desired configuration while minimizing the thruster actuation and maximizing Coulomb actuation to minimize propellant usage. A proportion of the decrease of the control Lyapunov function is designated for Coulomb actuation and the rest is performed by thrusters. Simulations show that an 85\% reduction of propellant compared to using solely thrusters is attainable using the proposed algorithm. It is shown that the best role for thrusters in a HCSF is to provide small corrections that cannot be provided by Coulomb actuation.
\end{abstract}
\section{Introduction}
A hybrid Coulomb spacecraft formation (HCSF) is a formation of spacecraft that are equipped with two methods of actuation: active charge control systems and conventional thrusters. The active charge control systems change the charges of the spacecraft in the formation which changes the Coulomb forces acting between the spacecraft. This is appealing for formation control since changing the charge requires negligible amounts of propellant \cite{Prospects}. Coulomb forces are limited because they come in action-reaction pairs and act along the line-of-sight between spacecraft. Hence, the spacecraft in a HCSF also are equipped with conventional thrusters to provide forces that cannot be provided by Coulomb actuation alone.  A HCSF is over-actuated in the sense that all of the actuation can be performed using solely thrusters \cite{Allocation}. The challenge of HCSF control is to try to minimize the amount of thrust used and instead try to use Coulomb forces as much as possible. This minimizes the amount of propellant required to perform maneuvers. 

A major challenge of Coulomb spacecraft formation control is the nonlinearity of the dynamics with respect to the charge input. This is due to the fact that the Coulomb forces are proportional to the products of charges. For HCSFs of two spacecraft, the nonlinearity is easy to overcome.  In that case the input can be redefined as the product of the charges so that the dynamics become linear in the new input. This is why most HCSF results in the literature consider only formations of two spacecraft \cite{ArunHybrid,CoulombNonlin,Seo}. For formations of more than two spacecraft, the problem is much more difficult and a simple redefinition of inputs as the products of charges cannot be done without imposing additional constraints. For example, it is impossible for all charge products to be negative and the magnitudes of the charge products are interrelated. These constraints are difficult to impose computationally. 

Pettazzi {\it et al.} \cite{HybridCoulombIzzo} proposed an architecture for HCSF control with more than two spacecraft where a path-planning algorithm gives acceleration commands to a HCSF and the charges and thrusts are computed to match the acceleration commands using projection. The issue with this approach is that it is difficult to design a path-planning algorithm to give acceleration commands that are best suited to minimize thruster input. In this paper, the problem is approached in a different way where the charges and thrusts are computed directly to satisfy a stability condition. This is done using a control allocation approach.

In this paper, the control allocation algorithm proposed revolves around control Lyapunov functions (CLFs). Using CLFs as a method for control involves specifying a Lyapunov function and finding control inputs such that the derivative of the Lyapunov function is negative. For more information about CLFs see \cite[\S 9.7]{Khalil} and \cite[\S 10]{HybridControl} and the references therein. Control allocation using CLFs has been studied in the literature \cite{1429240,TJONNAS20051160}. These references approach the control allocation problem using a pointwise optimization approach where a cost function that enumerates the tradeoff between actuators is minimized subject to the condition that the CLF decreases. Rather than solve the optimization problem periodically, a continuous-time gradient descent method is employed. %CLFs have been used in the literature for optimal control \cite{Pointwise_min}, control allocation \, multi-agent coordination \cite{1068003}, bipedal robotics \cite{7079382}, safety-critical control  \cite{7782377}, and data-driven control \cite{10666531}. 

When the system dynamics are affine in the control, then the derivative of the CLF is affine in the input. Therefore, pointwise optimal control can be found by solving a quadratic program (QP) even if the underlying dynamics are nonlinear in the state \cite{ 7079382,7782377}.  In this paper, it is shown that the derivative of the CLF for HCSFs  is quadratic in the charge input and affine in the thrust input. Hence, the control allocation algorithm is derived by manipulating the quadratic expression of the derivative of the CLF. Pointwise optimal control allocation for HCSFs could be performed by solving a quadratically-constrained quadratic program (QCQP) that has one quadratic constraint. In this case, an optimal solution can be found in polynomial time even if the quadratic constraint is nonconvex \cite{park2017generalheuristicsnonconvexquadratically}. The proposed control allocation algorithm in this paper is inspired by the pointwise optimal allocation approach. The relationship of the proposed algorithm to the optimal solution of a nonconvex one constraint QCQP is discussed in this paper. 

%It is shown through simulation that a point-wise minimum norm approach to the input does not necessitate the best performance. 

%CLFs have been used for  

A related idea from spacecraft attitude control described in \cite{7963020} is autonomous blending of passive and active (ABPA) control. In the ABPA setup, when the Lyapunov function is decreasing sufficiently without input (e.g. through fortuitous external disturbances), then the input can be zero to reduce propellant consumption. This behavior is also a feature in the proposed control allocation algorithm in this paper.  

The contribution of this paper to the state-of-the-art is the proposition of an algorithm to compute charges and thrusts to stabilize desired configurations of HCSFs with more than two spacecraft. The proposed algorithm does not require an intermediate path-planning algorithm to provide acceleration commands. The stability of the proposed algorithm is discussed. In simulation, the algorithm is shown to provide an 85\% reduction in propellant compared to using solely thrusters to perform the same task for a four spacecraft formation. 

The rest of the paper is organized as follows: \S \ref{s:setup} sets up the problem, introduces notation, and presents some pertinent background information. \S \ref{s:algorithm} presents the proposed control allocation algorithm. \S \ref{s:qcqprel} discusses the relationship between the proposed algorithm and the optimal solution of a CLF-based QCQP. \S \ref{s:example} presents a numerical simulation and discusses the effect of the tradeoff parameter in the proposed algorithm. Finally, \S \ref{s:conc} concludes the paper. 

\subsection{Notation}
Let $\mathbb{R}$ denote the set of real numbers. The set of real $n$-dimensional vectors is denoted $\mathbb{R}^n$ and the set of real $n\times m$ matrices is denoted $\mathbb{R}^{n\times m}$.  %The positive real numbers and integers are denoted by $\mathbb{R}_+$ and $\mathbb{Z}_+$, respectively. 

The transpose is denoted using a superscript $\top$. The Hermitian of a matrix $M\in\mathbb{R}^{n\times n}$ is denoted $\text{He}(M)=\frac{1}{2}\left(M+M^\top\right)$. The trace of a matrix $M$ is denoted by $\tr(M)$. The set of symmetric $n\times n$ matrices is denoted by $\mathbb{S}^n$. A matrix $M\in\mathbb{S}^n$ is positive (resp. negative) semidefinite if its eigenvalues are nonnegative (resp. nonpositive), and $M\succeq 0$ (resp. $\preceq 0$) denotes that $M$ is positive (resp. negative) semidefinite. 

If $x_1, \dots, x_n$ are all vectors, then $\col(x_1, \dots, x_n)$ is a column vector that is produced by stacking $x_1$ through $x_n$ on top of each other.  If $M\in\mathbb{R}^{n\times m}$ is a matrix, then $\vect(M)$ is the column vector produced by stacking consecutive columns of $M$ on top of each other. A diagonal matrix $M\in\mathbb{R}^{n\times n}$ with diagonal elements $M_{11},\dots,M_{nn}\in\mathbb{R}$ is denoted by $M=\text{diag}(M_{11},\dots,M_{nn})$.

The $n\times n$ identity matrix is denoted by $I_n$, the vector of all ones of dimension $n$ is denoted by $\onesvec_n$. A matrix of all zeros of dimension $n\times m$ is denoted by $0_{n\times m}$. If the size of the identity matrix or a matrix of zeros is clear from context then they will be simply denoted by $I$ and $0$, respectively. 

The Kronecker product of matrix $A$ with matrix $B$ is denoted by $A\otimes B$. 

The partial derivative of a function $f(x)$ with respect to $x$ is denoted by $\frac{\partial f}{\partial x}$. The Lie derivative of a function $h$ along $f$ is $L_fh=\frac{\partial h}{\partial x}f$.

\section{Problem Setup}\label{s:setup}

\subsection{Formation Definition}
Consider a formation of $\mathcal{N}>1$ spacecraft embedded in $\mathbb{R}^d$, where $d=1$ for collinear formations, $d=2$ for coplanar formations, or $d=3$ for three-dimensional formations. The position and velocity relative to an inertial frame of the spacecraft indexed $i\in\{1,\dots,\mathcal{N}\}$ are denoted by $x_i\in\mathbb{R}^d$ and $v_i\in\mathbb{R}^d$, measured in m and m/s, respectively. It's mass is denoted by $m_i$ which is measured in kg. 

Each spacecraft is modeled as a point charge with charge $q_i\in\mathbb{R}$, measured in C, which can be adjusted by its active charge control system. Moreover, each spacecraft is equipped with a set of conventional thrusters that can produce thrust $T_i\in\mathbb{R}^d$, measured in N. The vector of all charges is $q=\col(q_1,\dots, q_\mathcal{N})\in\mathbb{R}^\mathcal{N}$, and the vector of all thrusts is $T=\col(T_1,\dots,T_\mathcal{N})\in\mathbb{R}^{d\mathcal{N}}$. Note that space weather effects and the effects of spacecraft geometry \cite{CoulombForce} are not considered. In Fig. \ref{f:CoulombSchematic}, a HCSF with $\mathcal{N}=3$ and $d=2$ is depicted.

%The vector of all positions in the formation is $x=\col(x_1,\dots, x_\mathcal{N})\in\mathbb{R}^{d\mathcal{N}}$, the vector of all charges is $q=\col(q_1,\dots, q_\mathcal{N})\in\mathbb{R}^\mathcal{N}$, and the vector of all thrusts is $T=\col(T_1,\dots,T_\mathcal{N})\in\mathbb{R}^{d\mathcal{N}}$. In Fig. \ref{f:CoulombSchematic}, an HCSF with $\mathcal{N}=3$ and $d=2$ is depicted. 
 \begin{figure}[!h]
\centerline{\includegraphics[width=0.8\columnwidth]{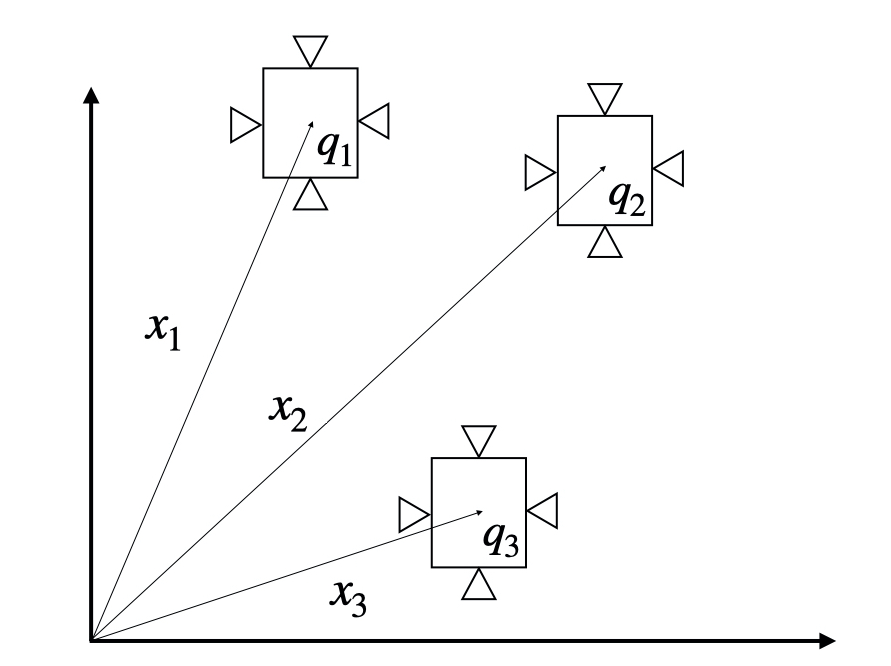}}
\caption{Diagram of a HCSF with $\mathcal{N}=3$ in $\mathbb{R}^2$. The positions of each spacecraft are $x_1,x_2,x_3\in\mathbb{R}^2$, and their respective charges are $q_1,q_2,q_3\in\mathbb{R}$. Each spacecraft is depicted with a set of thrusters that can generate thrusts $T_1,T_2,T_3\in\mathbb{R}^2$.}
\label{f:CoulombSchematic}
\end{figure}

The dynamics of a HCSF come from Newton's second law with the accelerations from Coulomb forces and thrust forces:
\begin{align}
\ddot x_i = \sum_{j=1 \atop j\ne i}^\mathcal{N}\frac{\kappa_c}{m_i}\frac{x_i-x_j}{\|x_i-x_j\|^3}q_iq_j+\frac{T_i}{m_i},\label{e:truedyn}
\end{align}
for all $i=1,\dots, \mathcal{N}$. Coulomb's constant is denoted $\kappa_c=8.99$e$+09$ Nm$^2$/C$^2$. 

\subsection{Relative Dynamics and Objective}
In this paper, the concern is the control of the spacecraft positions relative to each other. Therefore, the following relative coordinates are defined:
\begin{align}
\tilde{\xi}_i=x_{i+1}-x_1,\forall i=1,\dots,\mathcal{N}-1.\label{relativecoords}
\end{align}
Using \eqref{e:truedyn} and defining $\tilde{\xi}=\col(\tilde{\xi}_1,\dots,\tilde{\xi}_{\mathcal{N}-1}),$ the dynamics can be written compactly as follows:
\begin{align}
\ddot{\tilde{\xi}} = \tilde{g}_C(\tilde{\xi})\vect\left(qq^\top\right)+\tilde{g}_TT,\label{e:dyna}
\end{align} 
where $\tilde{g}_C:\mathbb{R}^{d(\mathcal{N}-1)}\to\mathbb{R}^{d(\mathcal{N}-1)\times \mathcal{N}^2}$ is a function which includes all of Coulomb's law's inverse-square terms and spacecraft masses. It is cumbersome to write out explicitly, but it can be derived using all of the terms in \eqref{e:truedyn} and the coordinate substitution \eqref{relativecoords}. The matrix $\tilde{g}_T\in\mathbb{R}^{d(\mathcal{N}-1)\times d\mathcal{N}}$ includes all the spacecraft masses from Newton's second law:
\begin{align}
\tilde{g}_T=\begin{bmatrix} -m_1^{-1}\onesvec_{\mathcal{N}-1}\otimes I_d & m_{2:\mathcal{N}}^{-1}\otimes I_d \end{bmatrix},\label{e:gt}
\end{align}
where $m_{2:\mathcal{N}}^{-1}=\diag(m_2^{-1},\dots,m_{\mathcal{N}}^{-1})$.

The first term on the right hand side of \eqref{e:dyna} is the acceleration due to Coulomb forces and the second term is the acceleration due to thruster actuation. Notice that the Coulomb force is the same if the charges are $q$ or $-q$. So in this paper, the convention is adopted that $q_1 \ge 0$ without having any effect on the attainable Coulomb forces. 

\begin{lemma}\label{l:overact}
Given a desired relative acceleration $u_\text{acc}^\text{des}\in\mathbb{R}^{d(\mathcal{N}-1)}$ and any charge $q^\star\in\mathbb{R}^\mathcal{N}$ and configuration $\tilde{\xi}\in\mathbb{R}^{d(\mathcal{N}-1)}$, there exists a thrust $T^\star\in\mathbb{R}^{d\mathcal{N}}$ such that 
\begin{align}
u_\text{acc}^\text{des}=\tilde{g}_C(\tilde{\xi})\vect(q^\star{q^\star}^\top)+\tilde{g}_TT^\star.\label{e:desacc}
\end{align}
\end{lemma}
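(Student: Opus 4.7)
The plan is to prove this by viewing \eqref{e:desacc} as a linear equation in the unknown $T^\star$ with a fixed right-hand side, and then showing that the coefficient matrix $\tilde{g}_T$ has full row rank. Concretely, I would first rearrange \eqref{e:desacc} to the form
\begin{equation*}
\tilde{g}_T T^\star = u_\text{acc}^\text{des} - \tilde{g}_C(\tilde{\xi})\vect(q^\star {q^\star}^\top),
\end{equation*}
and denote the right-hand side by $b \in \mathbb{R}^{d(\mathcal{N}-1)}$, which depends only on the (given and fixed) data $u_\text{acc}^\text{des}$, $q^\star$, and $\tilde{\xi}$. The existence claim thus reduces to showing that the linear map $T^\star \mapsto \tilde{g}_T T^\star$ is surjective onto $\mathbb{R}^{d(\mathcal{N}-1)}$.

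Next I would invoke the explicit block form \eqref{e:gt} of $\tilde{g}_T$ and focus on the right-hand block $m_{2:\mathcal{N}}^{-1} \otimes I_d$, which is square of size $d(\mathcal{N}-1) \times d(\mathcal{N}-1)$. Because $m_{2:\mathcal{N}}^{-1} = \diag(m_2^{-1},\dots,m_\mathcal{N}^{-1})$ has strictly positive diagonal entries (masses are positive), this block is block-diagonal with invertible blocks $m_i^{-1} I_d$ and hence invertible. Therefore $\tilde{g}_T$ has full row rank $d(\mathcal{N}-1)$ and is surjective, which proves existence of $T^\star$.

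To make the proof constructive, I would exhibit an explicit $T^\star$: set $T_1^\star = 0$ and
\begin{equation*}
\col(T_2^\star,\dots,T_\mathcal{N}^\star) = (m_{2:\mathcal{N}} \otimes I_d)\,b,
\end{equation*}
where $m_{2:\mathcal{N}} = \diag(m_2,\dots,m_\mathcal{N})$. Substituting this choice back into $\tilde{g}_T T^\star$ annihilates the first block of $\tilde{g}_T$ (because $T_1^\star = 0$) and produces $b$ from the second block, verifying \eqref{e:desacc}.

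There is no substantial obstacle here; the lemma is essentially a statement that thrusters alone can realize any desired relative acceleration, which follows immediately from the block structure of $\tilde{g}_T$. The only thing to be careful about is that the choice of $T^\star$ is not unique (there are $d$ extra degrees of freedom corresponding to $T_1^\star$), so the proof should be phrased as exhibiting one valid $T^\star$ rather than computing a unique inverse.
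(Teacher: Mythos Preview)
Your proposal is correct and essentially matches the paper's proof: both reduce the claim to the surjectivity (full row rank) of $\tilde{g}_T$ and then exhibit an explicit right inverse. The only cosmetic difference is that the paper invokes the Moore--Penrose pseudoinverse $\tilde{g}_T^\dagger$ for $T^\star$, whereas you pick the particular right inverse obtained by setting $T_1^\star=0$; your argument also spells out more carefully why the rows are independent via the invertible block $m_{2:\mathcal{N}}^{-1}\otimes I_d$.
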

\begin{proof}
The matrix $\tilde{g}_T$ in \eqref{e:gt} has more columns than rows and its rows are linearly independent, so it is right invertible, i.e., $\tilde{g}_T{\tilde{g}_T}^\dagger=I_{d(\mathcal{N}-1)}$. Therefore,  $T^\star = {\tilde{g}_T}^\dagger \left(u_\text{acc}^\text{des}-\tilde{g}_C(\tilde{\xi})\vect(q^\star{q^\star}^\top)\right)$ yields  \eqref{e:desacc}. 
\end{proof}

Lemma \ref{l:overact} proves that a HCSF is over-actuated in the sense that any desired acceleration can be generated exclusively using thrusters \cite{Allocation}. Furthermore, Lemma \ref{l:overact} will be useful for showing that the control allocation algorithm proposed later on is always feasible. 

Now define a desired relative configuration $\xi^\text{des}\in\mathbb{R}^{d(\mathcal{N}-1)}$ and define the error between the relative positions and the desired relative configuration:
\begin{align}
\xi = \tilde{\xi}-\xi^\text{des},
\end{align}
and define $\nu=\dot{\xi}$. Using the state vector
\begin{align*}
\Xi = \col(\xi, \nu),
\end{align*}
the dynamics can be written as:
\begin{align}
\dot{\Xi} = f(\Xi)+{g}_C({\xi})\vect\left(qq^\top\right)+{g}_TT,\label{e:dynamix}
\end{align}
where 
\begin{subequations}\label{e:fAB}
\begin{align}
& f(\Xi) = A\Xi, {g}_C({\xi})=B\tilde{g}_C(\xi+\xi^\text{des}), {g}_T=B\tilde{g}_T,\\
& A = \begin{bmatrix} 0_{d(\mathcal{N}-1)\times d(\mathcal{N}-1)} & I_{d(\mathcal{N}-1)}\\ 0_{d(\mathcal{N}-1)\times d(\mathcal{N}-1)}   & 0_{d(\mathcal{N}-1)\times d(\mathcal{N}-1)} \end{bmatrix},\label{eAB1}\\
&B= \begin{bmatrix} 0_{d(\mathcal{N}-1)\times d(\mathcal{N}-1)}  \\  I_{d(\mathcal{N}-1)} \end{bmatrix}.\label{eAB}
\end{align}
\end{subequations}

The objective of this paper is to stabilize the origin of the dynamics \eqref{e:dynamix} using thrust and charge feedback control. Doing so will reconfigure the HCSF to the desired configuration. Since Coulomb actuation requires little propellant, it is desired to minimize the amount of thrust throughout the maneuver. The approach taken in this paper is a control allocation approach with CLFs. 
\subsection{Control Lyapunov Function}
%Using the Lie derivative notation (see \cite[\S 8.1]{Khalil})

Lemma \ref{l:overact} guarantees that any relative acceleration can be produced by a combination of thrusters and Coulomb forces. So the basis of the control allocation algorithm that is proposed later on is a CLF-based acceleration  control law. 

It is assumed that a smooth positive definite function $V(\Xi)$ is known such that, for some $\varepsilon >0$,
\begin{align}
\inf_{u\in\mathbb{R}^{d(\mathcal{N}-1)}}\left\{L_fV+L_{B}Vu+\varepsilon V \right\}\le 0\label{e:CLF}
\end{align}
for all $\Xi\in\mathbb{R}^{2d(\mathcal{N}-1)}$, where $f, A,$ and $B$ are defined in \eqref{e:fAB}.

There are many ways to design $V$ such that \eqref{e:CLF} holds.  A quadratic CLF which satisfies \eqref{e:CLF} can be found by solving the following linear matrix inequalities for $Q\in\mathbb{S}^{2d(\mathcal{N}-1)}$ and $Y\in\mathbb{R}^{d(\mathcal{N}-1)\times2d(\mathcal{N}-1)}$:
\begin{subequations}
\label{e:LMI}
\begin{align}
&AQ+QA^\top +BY+Y^\top B^\top + \varepsilon Q\preceq 0,\\
&Q-\sigma I\succeq 0,
\end{align}
\end{subequations}
where $\sigma>0$. From the solution to \eqref{e:LMI}, the CLF will be 
\begin{align}
V(\Xi) = \Xi^\top P\Xi,\label{quadlyap}
\end{align}
where $P=Q^{-1}$ (see \cite[\S 7]{LMIbook} for more details). 
\subsection{Useful Lemma}
This section is ended with a lemma that will be useful in developing the control allocation algorithm in the following section.

\begin{lemma}\label{l:quad}
Let $a=\begin{bmatrix} a_1 & \dots & a_n\end{bmatrix}$ where $a_i\in\mathbb{R}^{1\times n}$ for each $i=1,\dots,n$, so $a\in\mathbb{R}^{1\times n^2}$. Then $a \vect(xx^\top) = x^\top\left\lfloor a\right\rceil x$ for all $x\in\mathbb{R}^n$, where 
\begin{align*}
\left\lfloor a\right\rceil = \text{He}\left(\begin{bmatrix} a_1 \\ \vdots \\ a_n
\end{bmatrix}\right).
\end{align*}
\end{lemma}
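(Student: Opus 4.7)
The plan is to compute $a\vect(xx^\top)$ explicitly by unpacking the vectorization, recognize the resulting scalar expression as a quadratic form $x^\top M x$ for a natural choice of $M$, and then symmetrize to obtain the Hermitian part.

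First, I would observe that the $i$-th column of $xx^\top$ is $x_i x$, so
\begin{align*}
\vect(xx^\top) = \col(x_1 x, x_2 x, \dots, x_n x).
\end{align*}
Partitioning $a$ into the row-blocks $a_1, \dots, a_n$ and using this block-column representation, the product becomes
\begin{align*}
a\vect(xx^\top) = \sum_{i=1}^n a_i (x_i x) = \sum_{i=1}^n x_i (a_i x).
\end{align*}

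Next, I would introduce the matrix $M \in \mathbb{R}^{n\times n}$ obtained by stacking the row-blocks of $a$ vertically,
\begin{align*}
M = \begin{bmatrix} a_1 \\ \vdots \\ a_n \end{bmatrix},
\end{align*}
so that $Mx = \col(a_1 x, \dots, a_n x)$. Then by inspection $x^\top M x = \sum_{i=1}^n x_i (a_i x)$, which matches the expression above, giving $a\vect(xx^\top) = x^\top M x$.

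Finally, since $x^\top M x$ is a scalar, it equals its own transpose, so
\begin{align*}
x^\top M x = \tfrac{1}{2}\bigl(x^\top M x + x^\top M^\top x\bigr) = x^\top \text{He}(M)\, x = x^\top \lfloor a\rceil x,
\end{align*}
which completes the argument. There is no real obstacle here; the only care needed is in the bookkeeping of the column-stacking convention for $\vect(\cdot)$ and the row-partition of $a$, ensuring that the indices line up so that stacking the $a_i$ as rows (rather than columns) yields the correct $M$ before symmetrization.
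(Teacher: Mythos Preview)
Your proof is correct and follows essentially the same route as the paper: both expand $a\vect(xx^\top)$ as $\sum_i x_i(a_i x)$, identify this as $x^\top M x$ with $M$ the row-stacked matrix, and pass to the Hermitian part. You include an explicit justification of the symmetrization step (a scalar equals its transpose), which the paper leaves implicit, but the argument is otherwise identical.
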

\begin{proof}
This can be proven by expanding 
\begin{align*}
a\vect(xx^\top)&=x_1a_1x+\dots+x_na_nx \\
&= x^\top\begin{bmatrix} a_1 \\ \vdots \\ a_n
\end{bmatrix}x=x^\top\left\lfloor a\right\rceil x.
\end{align*}
\end{proof}

Note that the operator $\left\lfloor a\right\rceil$ defined in Lemma \ref{l:quad} can be coded as the Hermitian of a reshape of $a$ from $1\times n^2$ to $n\times n$.
\section{Control Allocation Algorithm} \label{s:algorithm}
 \subsection{Overview of the Algorithm}
In the proposed control allocation algorithm, a proportion of the decrease of the Lyapunov function will come from the Coulomb actuation and the rest of the Lyapunov function decrease will come from thruster actuation. Let $\eta\in[0,1]$ be the desired proportion of the Lyapunov decrease that comes from Coulomb actuation. In the case $\eta=0$ there is no Coulomb actuation, and in the case $\eta=1$ it is desired that all of the Lyapunov function decrease comes from Coulomb actuation. The proportion $\eta$ is a design parameter. In the simulation section later on, the effect of the choice of $\eta$ will be discussed. 

Let $T^\star$ and $q^\star$ denote the outputs of the control allocation algorithm. The CLF evolves according to:
\begin{align}
\dot{V}=L_fV+L_{g_C}V\vect(qq^\top)+L_{g_T}VT.\label{e:CLFevolve}
\end{align}

If $L_fV+\varepsilon V\le 0$, then no charge or thrust is necessary such that $\dot{V}\le -\varepsilon V$, so $q^\star=0_\mathcal{N}$ and $T^\star=0_{d\mathcal{N}}$.  

If $L_fV+\varepsilon V> 0$, then control is required. First, the charge $q^\star$ will be computed such that 
\begin{align}
L_{g_C}V\vect(q^\star{q^\star}^\top)+\eta(L_fV+\varepsilon V)\le 0\label{e:qstar}
\end{align}
if such a $q^\star$ exists. Otherwise,  $q^\star=0_\mathcal{N}$. Using this $q^\star$, the thrust is computed as follows:
\begin{align}
&T^\star=\arg\min T^\top T\nonumber \\
&\text{s.t. } L_fV+L_{g_C}V\vect(q^\star{q^\star}^\top)+L_{g_T}VT+\varepsilon V\le 0.\label{e:QP}
\end{align}

 Lemma \ref{l:overact} can be used to show that the quadratic program \eqref{e:QP} is always feasible.
 \begin{proposition}
Suppose the CLF $V$ satisfies \eqref{e:CLF}. The quadratic program \eqref{e:QP} is feasible for all $\Xi\in\mathbb{R}^{2d(\mathcal{N}-1)}$ and $q^\star\in\mathbb{R}^\mathcal{N}$.
 \end{proposition}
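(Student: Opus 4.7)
The plan is to exhibit an explicit feasible thrust $T$ for the QP \eqref{e:QP} by combining the CLF hypothesis \eqref{e:CLF} with the right-invertibility of $\tilde{g}_T$ established in the proof of Lemma \ref{l:overact}.

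First I would unpack $L_{g_T}V$ using the factorization $g_T = B\tilde{g}_T$ from \eqref{e:fAB}. This gives the identity
\begin{align*}
L_{g_T}V = \frac{\partial V}{\partial \Xi} B \tilde{g}_T = (L_B V)\,\tilde{g}_T,
\end{align*}
and similarly $L_{g_C}V = (L_B V)\,\tilde{g}_C(\xi+\xi^{\text{des}})$. Next, by hypothesis \eqref{e:CLF} there exists some $u^\star \in \mathbb{R}^{d(\mathcal{N}-1)}$ (interpretable as a desired relative acceleration) such that
\begin{align*}
L_f V + L_B V\, u^\star + \varepsilon V \le 0.
\end{align*}

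The key step is then to invoke Lemma \ref{l:overact}: for the given state $\Xi$ (equivalently $\xi$) and arbitrary $q^\star \in \mathbb{R}^{\mathcal{N}}$, set the desired acceleration to $u_{\text{acc}}^{\text{des}} = u^\star$, which yields a thrust $T^\star = \tilde{g}_T^{\dagger}\bigl(u^\star - \tilde{g}_C(\xi+\xi^{\text{des}})\vect(q^\star q^{\star\top})\bigr)$ satisfying $\tilde{g}_T T^\star = u^\star - \tilde{g}_C(\xi+\xi^{\text{des}})\vect(q^\star q^{\star\top})$. Substituting into the constraint of \eqref{e:QP},
\begin{align*}
L_f V + L_{g_C}V \vect(q^\star q^{\star\top}) + L_{g_T}V\, T^\star + \varepsilon V
&= L_f V + (L_B V)\,\tilde{g}_C \vect(q^\star q^{\star\top}) + (L_B V)\,\tilde{g}_T T^\star + \varepsilon V\\
&= L_f V + (L_B V)\, u^\star + \varepsilon V \le 0,
\end{align*}
so $T^\star$ is feasible, which is all that is needed since \eqref{e:QP} is a strictly convex QP with a nonempty feasible set and hence admits a (unique) minimizer.

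I do not expect a serious obstacle here: the CLF assumption supplies a stabilizing virtual acceleration, and the overactuation lemma lets thrusters alone realize any such acceleration regardless of the prescribed $q^\star$. The only subtlety worth stating carefully is that the two Lie derivatives share the common left factor $L_B V$, which is what allows the hypothesized $u^\star$ to be translated into an actual thrust satisfying the CLF decrease condition.
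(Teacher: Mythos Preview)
Your argument is correct and matches the paper's own proof essentially line for line: invoke the CLF condition \eqref{e:CLF} to obtain a virtual acceleration $u^\star$, then use Lemma~\ref{l:overact} to realize $u^\star$ via thrusters for the given $q^\star$, and conclude feasibility from the factorizations $g_C=B\tilde g_C$, $g_T=B\tilde g_T$. Your explicit identification $L_{g_T}V=(L_BV)\tilde g_T$ just spells out what the paper summarizes as ``it follows from the definitions of $g_C$ and $g_T$ in \eqref{e:fAB}.''
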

\begin{proof}
For any given $\Xi\in\mathbb{R}^{2d(\mathcal{N}-1)}$, $\exists u^\star\in \mathbb{R}^{d(\mathcal{N}-1)}$ such that $L_fV+L_{B}Vu^\star+\varepsilon V\le 0$ via  \eqref{e:CLF}. By Lemma  \ref{l:overact}, regardless of the choice of $q^\star$, there exists $T^\star$ such that $u^\star=\tilde{g}_C(\tilde{\xi})\vect(q^\star{q^\star}^\top)+\tilde{g}_TT^\star.$ It follows from the definitions of $g_C$ and $g_T$ in \eqref{e:fAB} that with this $T^\star$ and $q^\star$, $ L_fV+L_{g_C}V\vect(q^\star{q^\star}^\top)+L_{g_T}VT^\star+\varepsilon V\le 0$. The result follows. 
\end{proof}
%Since $V$ satisfies \eqref{e:CLF} and a $T$ can be found to produce any desired acceleration, the quadratic program \eqref{e:QP} is always feasible for any $q^\star$ and $\Xi$. 

Using the chosen $q^\star$ and $T^\star$ yields  an exponential decay of the Lyapunov function, i.e. $\dot{V}\le -\varepsilon V$. 
\subsection{Finding $q^\star$} \label{s:qstar}
The question remains of how to find $q^\star$ that satisfies \eqref{e:qstar}. Using Lemma \ref{l:quad}, the left-hand side of \eqref{e:qstar} can be rewritten as quadratic in $q^\star$:
\begin{align}
{q^\star}^\top\left\lfloor L_{g_C}V\right\rceil q^\star+\eta(L_fV+\varepsilon V).\label{e:CLFevolve2}
\end{align}

Consider the following eigenvalue decomposition:
\begin{align}
\left\lfloor L_{g_C}V\right\rceil = R\Lambda R^\top\label{e:eigendecomp1}
\end{align}
where $R$ is a unitary matrix, $\Lambda=\text{diag}(\Lambda_{11},\dots,\Lambda_{\mathcal{N}\mathcal{N}})$ and $\Lambda_{11}\le \dots\le \Lambda_{\mathcal{N}\mathcal{N}}$. By defining $\tilde{q}^\star=R^\top q^\star$, \eqref{e:CLFevolve2} is equivalent to 
\begin{align}
\sum_{i=1}^\mathcal{N}\Lambda_{ii} {\tilde{q}_i}^{\star^2}+\eta(L_fV+\varepsilon V). \label{e:decomprewrite}
\end{align}
From \eqref{e:decomprewrite}, it is clear that if $\Lambda_{11}<0$, then using 
\begin{subequations}\label{e:qstarbest}
\begin{align}
&\tilde{q}_1^\star=\sqrt{\dfrac{-\eta(L_fV+\varepsilon V)}{\Lambda_{11}}},  \label{e:qstar1}\\
&\tilde{q}_i^\star=0, i=2,\dots,\mathcal{N}, \label{e:qstarzeros}
\end{align}
\end{subequations}
yields 
\begin{align}
\sum_{i=1}^\mathcal{N}\Lambda_{ii} {\tilde{q}_i}^{\star^2}+\eta(L_fV+\varepsilon V)\le 0. 
\end{align}
Therefore, using 
\begin{align}
q^\star=R\tilde{q}^\star,\label{e:transform}
\end{align}
 where $\tilde{q}^\star$ comes from \eqref{e:qstarbest} satisfies \eqref{e:qstar}.

In the Coulomb dynamics \eqref{e:truedyn}, the right hand side only contains charge products $q_iq_j$ where $i\ne j$. Therefore, the diagonal elements of the matrix $\left\lfloor L_{g_C}V\right\rceil$ are all equal to zero. This means that 
\begin{align*}
\text{Tr}\left(\left\lfloor L_{g_C}V\right\rceil\right)=\sum_{i=1}^\mathcal{N}\Lambda_{ii}=0.
\end{align*}
It follows, therefore, that, unless $\Lambda_{ii}=0$ for all $i=1,\dots, \mathcal{N}$, $\Lambda_{11}<0$. Since $\left\lfloor L_{g_C}V\right\rceil$ is symmetric,  its eigenvalues are all equal to zero if and only if it is the zero matrix. 

As $\Xi\to 0$,   $\left\lfloor L_{g_C}V\right\rceil\to 0$, so $\Lambda_{11}$, though negative, will have smaller and smaller magnitude. So using \eqref{e:qstar1} could yield impractically large amounts of charge. One way to guard against this is to specify a maximum charge $q_{\max}>0$ and substitute \eqref{e:qstar1} for the following:
\begin{align}
\tilde{q}_1^\star=\min\left\{q_{\max},\sqrt{\dfrac{-\eta(L_fV+\varepsilon V)}{\Lambda_{11}}}\right\}. \label{e:qstar1min}
\end{align}
Using $q^*$ computed by \eqref{e:transform} where $\tilde{q}^\star$ is from \eqref{e:qstar1min} and \eqref{e:qstarzeros}  always produces a decrease in the CLF. Moreover, it produces the closest $L_{g_C}V\vect(q^\star{q^\star}^\top)$ to $-\eta(L_fV+\varepsilon V)$ where $\|q^\star\|\le q_{\max}$.

\subsection{Summary and Implementation}\label{s:summary}
The process for choosing charges and thrusts described above is summarized as Algorithm 1\footnote{Note that EigR$\left(\left\lfloor L_{g_C}V\right\rceil\right)$ in Algorithm 1 denotes a function that  returns the diagonal and unitary matrices from \eqref{e:eigendecomp1}.
}.

%According to   for any $\Delta >0$ and $\Delta_\Xi$ such that $\|\Xi(0)\|\le \Delta_\Xi$, there exists a sampling time $\Delta t^\star$ such that if $\Delta t\in (0, \Delta t^\star]$ then 
%\begin{align}
%\|\Xi(t)\|\le \beta(\|\Xi(0)\|,t)+\Delta, \label{e:stab}
%\end{align}
%where $\beta\in\mathcal{KL}$\footnote{The definition of a $\mathcal{KL}$ function can be found in \cite[Definition 4.1]{Khalil}. The condition \eqref{e:stab} means that as $t\to\infty$, $\Xi(t)$ converges to an invariant ball of radius $\Delta$ in finite time.}. In other words, it is expected that there will be some steady-state error and that error can be made arbitrarily small by choosing $\Delta t$ small enough \cite[\S 10.5]{HybridControl}. If $\Delta t$ is too small, however, Algorithm 1 cannot be computed sufficiently fast to implement the control.  
\begin{algorithm}
\begin{algorithmic}
\caption{CLF-based Control Allocation}
\REQUIRE $\eta\in[0,1], \varepsilon>0, q_{\max}>0, \Xi\in \mathbb{R}^{2d(\mathcal{N}-1)}$
\STATE Compute $V, L_fV,\left\lfloor L_{g_C}V\right\rceil, L_{g_T}V$ 
\IF{$L_fV+\varepsilon V\le 0$}
\STATE $q^\star \leftarrow 0_{\mathcal{N}}$
\STATE $T^\star \leftarrow 0_{d\mathcal{N}}$
\ELSE
\STATE $\Lambda, R\leftarrow$EigR$\left(\left\lfloor L_{g_C}V\right\rceil\right)$
\IF{$\Lambda_{11}\ge 0$}
\STATE $\tilde{q}_1^\star\leftarrow 0$
\ELSE
\STATE $\tilde{q}_1^\star\leftarrow \min\left\{q_{\max},\sqrt{\dfrac{-\eta(L_fV+\varepsilon V)}{\Lambda_{11}}} \right\}$
\ENDIF
\STATE $q^\star\leftarrow R\begin{bmatrix}\tilde{q}_1^\star \\ 0_{\mathcal{N}-1}\end{bmatrix}$
\STATE $T^\star\leftarrow \eqref{e:QP}$
\ENDIF
\RETURN $q^\star,T^\star$
\end{algorithmic}
\end{algorithm}

If Algorithm 1 is implemented in a continuous-time fashion, the result would be the exponential stabilization of the origin of \eqref{e:dynamix}; however, Algorithm 1 cannot be implemented in a continuous-time fashion.  The charges and thrusts computed by Algorithm 1 are, instead, implemented in a sample-and-hold fashion with a sampling period $\Delta t >0$. Implementing a continuos-time controller via sample-and-hold is referred to as emulating the continuous-time controller   \cite[\S 2]{NecsicSurvey01}.

When emulating an asymptotically stabilizing continuous-time controller, the result is expected to be semiglobal practical stabilization of the origin \cite{NecsicSurvey01,4177901,7525591}. Roughly speaking, this means that there will be some steady-state error and that error can be made arbitrarily small by choosing $\Delta t$ small enough. The results in \cite{NecsicSurvey01,4177901,7525591} for semiglobal practical stabilization assume that the feedback law is continuous. Let $(q^\star,T^\star) = K_{\varepsilon,\eta}(\Xi)$ denote the function which provides the output of Algorithm 1 for fixed $\varepsilon >0$ and $\eta\in[0,1]$. This function $K_{\varepsilon,\eta}$ is not a continuous function due to the use of the eigendecomposition. Even though $\left\lfloor L_{g_C}V\right\rceil$ and its eigenvalues vary continuously with $\Xi$, its eigenvectors do not  in general \cite[\S 2.5.3]{PerturbationTheory}. In simulations, it is observed that using Algorithm 1 with $\Delta t$ small enough is stabilizing with steady-state error. Due to lack of continuity, it may not be the case that the error can be made arbitrarily small by choosing the sampling time appropriately. 

\section{Relationship between Algorithm 1 and Pointwise Optimal Allocation Approach}\label{s:qcqprel}
Consider the following QCQP which represents the pointwise optimization approach to control allocation with CLFs:
\begin{align}
&\min \gamma_1 q^\top q+\gamma_2 T^\top T\nonumber \\
&\text{s.t. } L_fV+{q}^\top\left\lfloor L_{g_C}V\right\rceil q+L_{g_T}VT+\varepsilon V\le 0.\label{e:QCQP}
\end{align}
%The constants $\gamma_1,\gamma_2\ge 0$ are related to the tradeoff between thrust and charge input. 

Unless $L_fV+\varepsilon V<0$, the optimal $q^\star$ and $T^\star$ to \eqref{e:QCQP} will satisfy the constraint with equality \cite[Appendix B]{park2017generalheuristicsnonconvexquadratically}, that is:
\begin{align}
 L_fV+{q^\star}^\top\left\lfloor L_{g_C}V\right\rceil q^\star+L_{g_T}VT^\star+\varepsilon V= 0.\label{e:equalityconstraint}
\end{align}
The equality constraint \eqref{e:equalityconstraint} implies that a proportion of the Lyapunov decrease will occur due to the Coulomb actuation and the rest occurs due to the thruster actuation. The amount of decrease each is responsible for is mediated by $\gamma_1$ and $\gamma_2$.

It is easy to see that the optimal value of  \eqref{e:QCQP} with $\gamma_1 =0$ and $\gamma_2 > 0$ is the same as the output of Algorithm 1 with $\eta =1$ (without the maximum charge constraint $q_{\max}$). 
\subsection{The Parameter $\eta$}
Now, the parameter $\eta$ from Algorithm 1 is analogized to the parameters in \eqref{e:QCQP}. The rest of this section follows reasoning similar to \cite[Appendix B]{park2017generalheuristicsnonconvexquadratically}. It is assumed throughout the rest of this section that $L_fV+\varepsilon V> 0$. Lastly, for simplicity, $\gamma_1=1$ and $\gamma_2>0$ for the rest of this section. %Increasing $\gamma_2$ increases the penalty on thrust input which leads to lower thrust. 

Consider the eigenvalue decomposition \eqref{e:eigendecomp1}, and the change of variables $\tilde{q}=R^\top q$ and $\tilde{T}=T$, so  \eqref{e:QCQP} is equivalent to the following:
\begin{align}
&\min \tilde{q}^\top \tilde{q}+\gamma_2 \tilde{T}^\top \tilde{T}\nonumber \\
&\text{s.t. } L_fV+{\tilde{q}}^\top\Lambda  \tilde{q}+L_{g_T}V\tilde{T}+\varepsilon V\le 0.\label{e:QCQPcoord}
\end{align}

The Lagrangian of \eqref{e:QCQPcoord} is the following:
\begin{align}
\mathcal{L} = \begin{bmatrix} \tilde{q} \\ \tilde{T} \end{bmatrix}^\top\begin{bmatrix} I + \mu \Lambda & 0 \\ 0 & \gamma_2 I \end{bmatrix}\begin{bmatrix} \tilde{q} \\ \tilde{T} \end{bmatrix}+\mu( L_{g_T}V\tilde{T}+L_fV+\varepsilon V),\nonumber
\end{align}
where $\mu\in\mathbb{R}$ is the Lagrange multiplier. The QCQP \eqref{e:QCQPcoord} is always feasible due to the analysis presented in \S\ref{s:algorithm}, so there must exist $\mu$ such that $I+\mu\Lambda\succeq 0$ where minimizing $\mathcal{L}$ also satisfies the constraint.

First consider the optimality conditions for the Lagrangian:
\begin{align}
&\frac{\partial \mathcal{L}}{\partial \tilde{q}} = 2\tilde{q}^\top (I+\mu \Lambda)=0, \nonumber\\
&\implies (I+\mu \Lambda)\tilde{q}^\star=0,\label{e:optimalq}
\end{align}
and 
\begin{align}
&\frac{\partial \mathcal{L}}{\partial \tilde{T}} = 2\gamma_2 \tilde{T}^\top +\mu L_{g_T}V =0,\nonumber\\
&\implies \tilde{T}^\star = -\frac{\mu}{2\gamma_2}L_{g_T}V^\top. \nonumber
\end{align}
Moreover, due to complementary slackness, the constraint will be satisfied in equality:
\begin{align}
L_fV+{\tilde{q}}^{\star^\top}\Lambda  \tilde{q}^\star+L_{g_T}V\tilde{T}^\star+\varepsilon V=0.\label{equality}
\end{align}

Now consider \eqref{e:optimalq} and two cases: if $\mu$ is such that $I+\mu\Lambda$ is nonsingular, then $\tilde{q}^\star=0$, which is not a desirable solution. So consider the case where $I+\mu\Lambda$ is singular. Since, by the argumentation in \S \ref{s:qstar}, $\Lambda_{11}<0$ unless $\left\lfloor L_{g_C}V\right\rceil=0$ (in which case, the original problem reduces to a QP), the only way for $I+\mu \Lambda$ to be positive semidefinite and singular is if 
\begin{align*}
\mu = -\frac{1}{\Lambda_{11}}.
\end{align*}

With this $\mu$, it follows from \eqref{e:optimalq} that $\tilde{q}_i^\star=0$ for $i=2,\dots,\mathcal{N}$. Putting this all together with \eqref{equality} yields:
\begin{align}
\Lambda_{11}\tilde{q}_1^{\star^2}+\frac{1}{2\gamma_2\Lambda_{11}}L_{g_T}VL_{g_T}V^\top+L_fV+\varepsilon V=0.\label{alltogether}
\end{align} 

Notice that if $\gamma_2$ is too small, the equation \eqref{alltogether} cannot hold. Hence, to guarantee that an optimal solution with nonzero $\tilde{q}^\star$ occurs, $\gamma_2$ needs to be updated at every sampling instance based on the values of the other terms in \eqref{alltogether}. In essence, Algorithm 1 returns the optimal arguments of \eqref{e:QCQP} where $\gamma_1=1$ and 
\begin{align*}
\gamma_2 = -\frac{L_{g_T}VL_{g_T}V^\top}{2(1-\eta)\Lambda_{11}(L_fV+\varepsilon V)}.
\end{align*}
Since $\eta$ belongs to a bounded interval $[0,1]$ and it is intuitive enough to work with, it is perhaps a better tradeoff parameter to use than $\gamma_2$ to achieve similar results. Algorithm 1 can be thought of as a good heuristic for the pointwise optimal allocation approach.

\section{Numerical Example}\label{s:example}
This example is of a formation of $\mathcal{N}=4$ hybrid Coulomb spacecraft. The masses of the spacecraft are $m_1=100$ kg, $m_2=96$ kg, $m_3=130$ kg, and $m_4=100$ kg. The spacecraft initial positions are the following:
\begin{align*}
x_1 = \begin{bmatrix}-100\\ -90\end{bmatrix},
x_2 = \begin{bmatrix}-50\\ 30\end{bmatrix},
x_3 = \begin{bmatrix}100\\ 60\end{bmatrix},
x_4 = \begin{bmatrix} 100\\ -60\end{bmatrix}.
\end{align*}
The spacecraft start from rest, so all the velocities are zero. 

The desired relative formation is the following:
\begin{align*}
\xi^{des} = \begin{bmatrix}0&150&150&150&150&0\end{bmatrix}^\top,
\end{align*}
which describes a square with sides of 150 m length. This makes the initial deviation of the relative position from the desired relative position 
\begin{align*}
\xi(0) = \begin{bmatrix} 50& -30 & 50& 0&50&30\end{bmatrix}^\top.
\end{align*}

The quadratic CLF \eqref{quadlyap} where
\begin{align*}
P = \begin{bmatrix}0.995057 I_6 &  0.00497061I_6\\  0.00497061I_6 & 0.995057 I_6 \end{bmatrix}
\end{align*}
was found by solving the LMI in \eqref{e:LMI} where $\varepsilon =0.01$ and $\sigma=1$ using \verb|Convex.jl| \cite{convexjl} and the \verb|SCS| solver \cite{ocpb:16}.

All of the simulations in this section were performed in \verb|Julia| using  \verb|RobotDynamics.jl|\footnote{https://rexlab.ri.cmu.edu/RobotDynamics.jl/stable/}. More details about the implementation of Algorithm 1 are discussed in \S \ref{s:compburd}

 With $\eta=0.99$ and $\Delta t=0.1$ secs, the trajectory of $\Xi$ is plotted in Fig. \ref{f:traj} where it can be seen that $\Xi$ settles towards 0 with a very small steady state error (within cms of the desired configuration, which is very good considering $\|\xi(0)\|=96.4$ m).

\begin{figure}[h]
\centerline{\includegraphics[width=0.99\columnwidth]{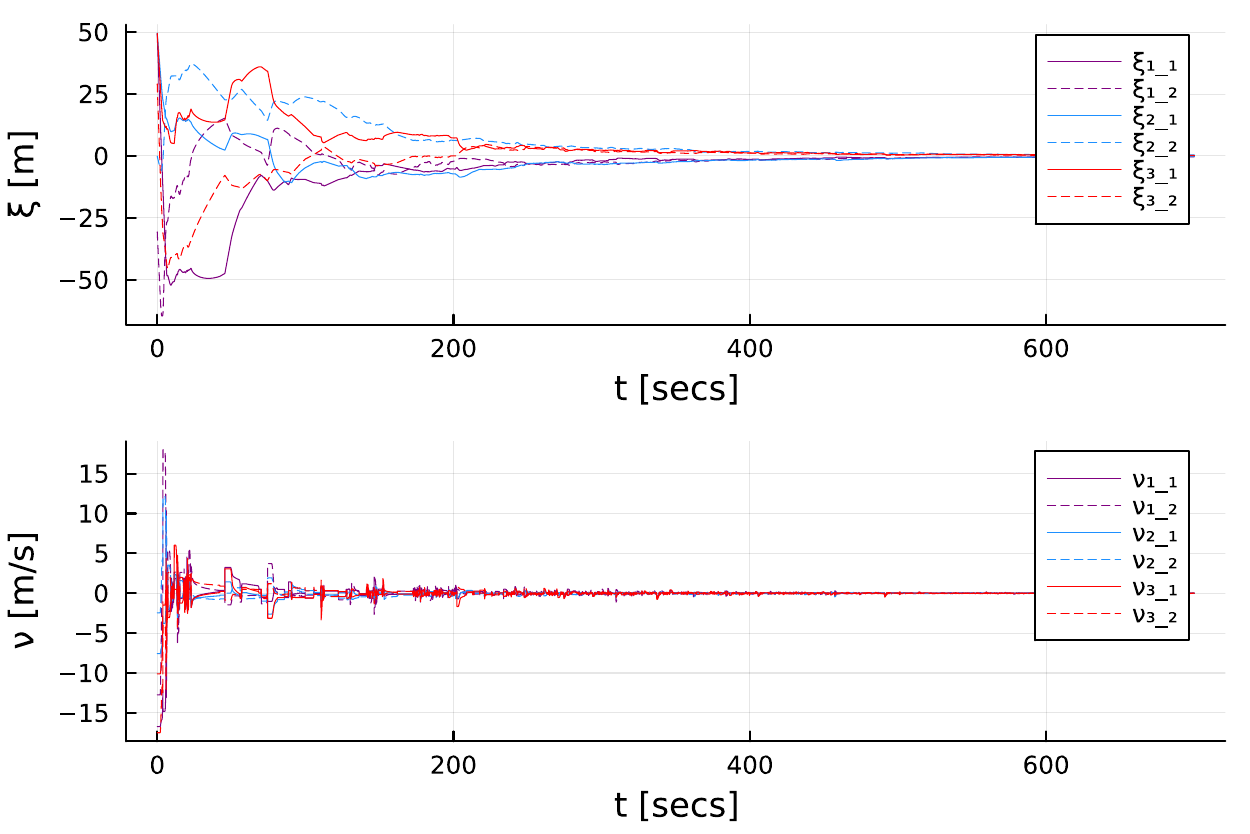}}
\caption{Trajectory when $\eta =0.99$.}
\label{f:traj}
\end{figure}

The sequence of charges are plotted in Fig \ref{f:charges} and the thrust magnitude (i.e. $\|T\|$) is plotted in Fig. \ref{f:thrust}. The largest inputs occur in the beginning which constitute the initial jerk to get the spacecraft moving towards the desired configuration.
\begin{figure}[h]
\centerline{\includegraphics[width=0.99\columnwidth]{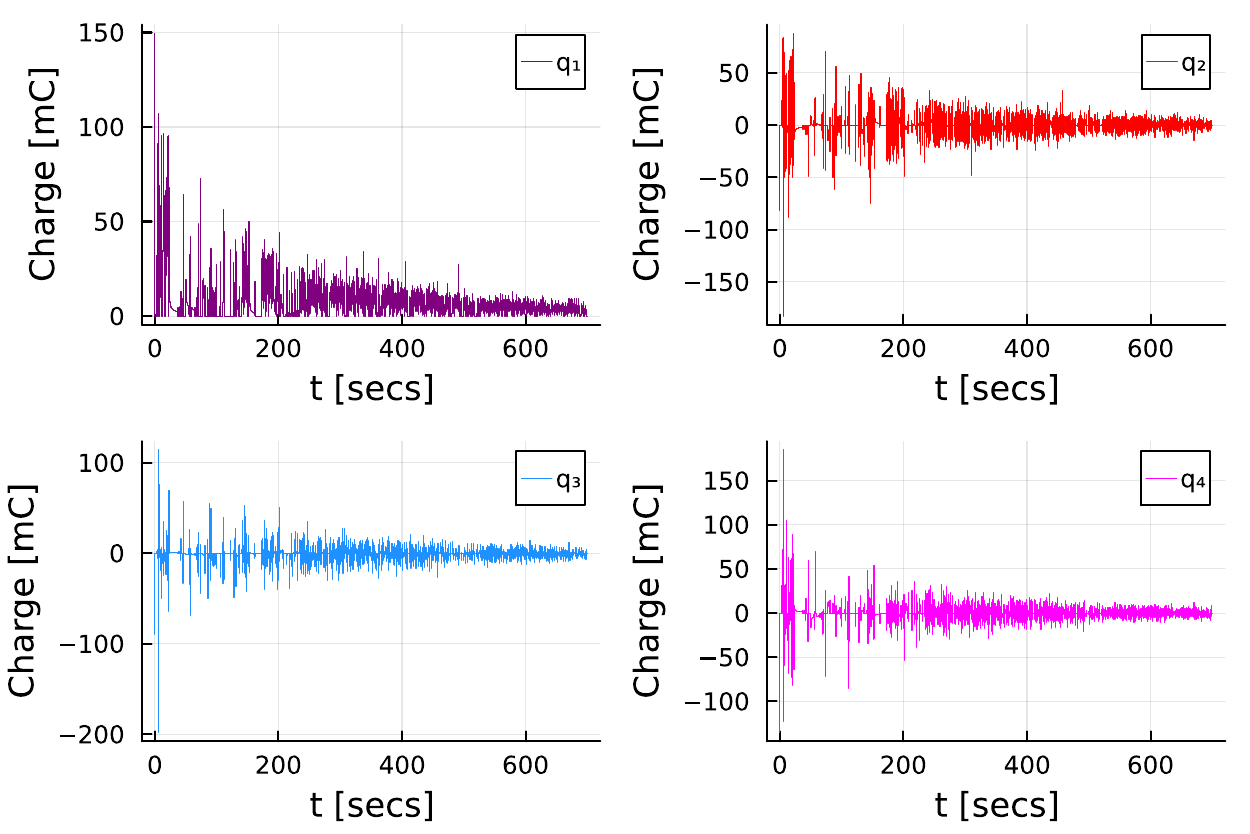}}
\caption{Charges  when $\eta =0.99$.}
\label{f:charges}
\end{figure}

\begin{figure}[h]
\centerline{\includegraphics[width=0.99\columnwidth]{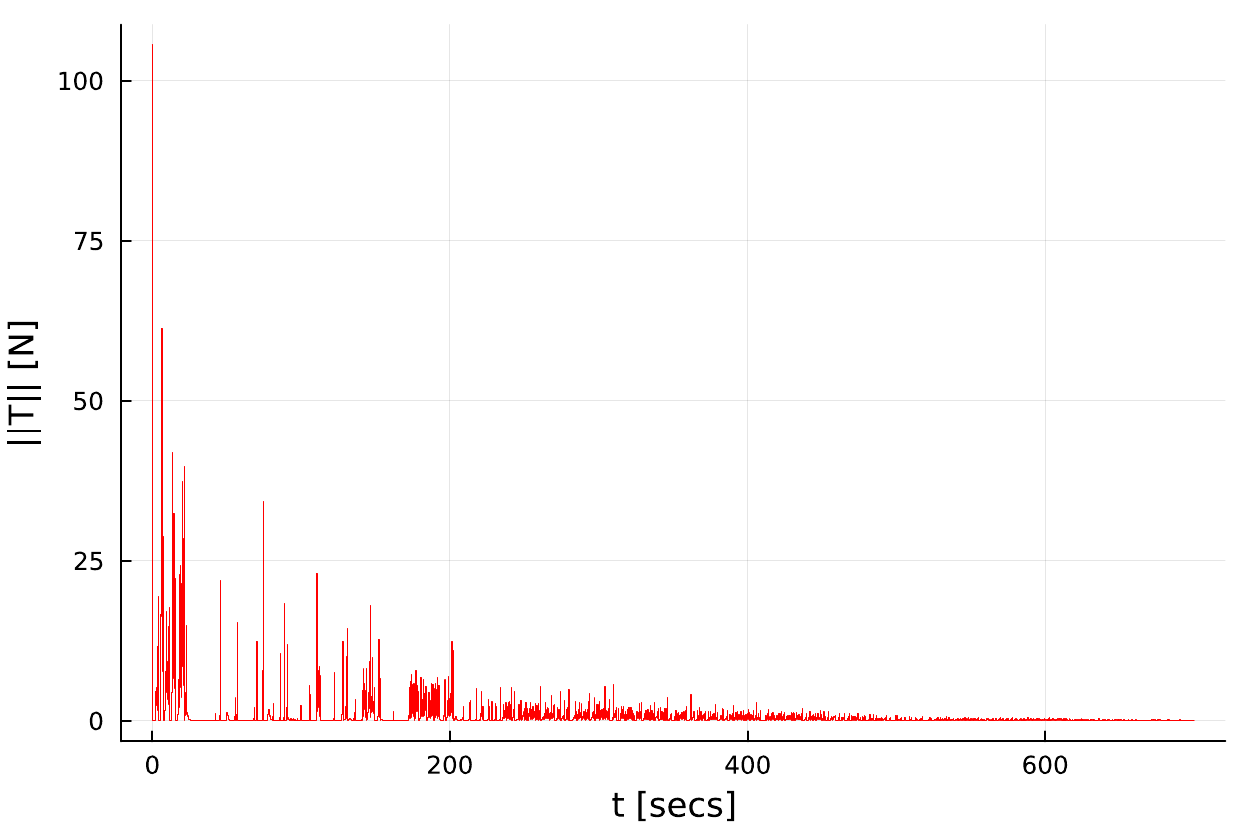}}
\caption{Thrust magnitude  when $\eta =0.99$.}
\label{f:thrust}
\end{figure}

It is difficult to see some of the detail of the charge and thrust sequences in Fig. \ref{f:charges} and Fig. \ref{f:thrust}, so  Fig. \ref{f:zoom1} and Fig. \ref{f:zoom2} show a zoomed in portion of the inputs between 15 secs and 25 secs to get a clearer image of what the inputs look like. Due to the nonlinear and high-dimensional nature of the charge inputs, it is difficult to intuit the behavior of the charges. Some behavior resembling the alternating `push-pull' behavior via alternating charge signs described in \cite{tahir2024modelpredictivecontrolcollinear} can be seen. Notice also that there are portions (e.g. around the 16 secs mark) where thrust and charges are equal to zero. Since the Lyapunov function is decreasing sufficiently in open-loop at that time (i.e. $L_fV+\varepsilon V\le 0$), no input is required.  

\begin{figure}[h]
\centerline{\includegraphics[width=0.99\columnwidth]{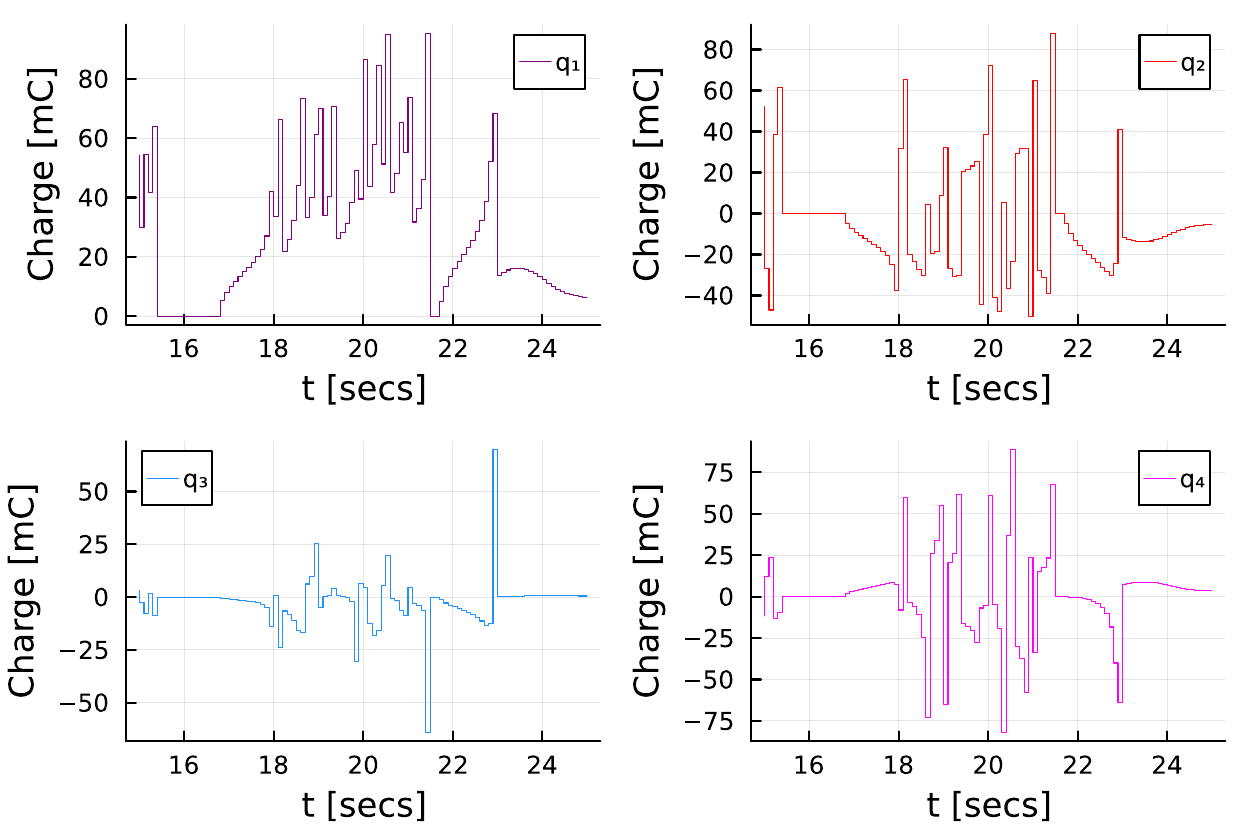}}
\caption{Zoom in on a portion of the charges when $\eta=0.99$.}
\label{f:zoom1}
\end{figure}

\begin{figure}[h]
\centerline{\includegraphics[width=0.99\columnwidth]{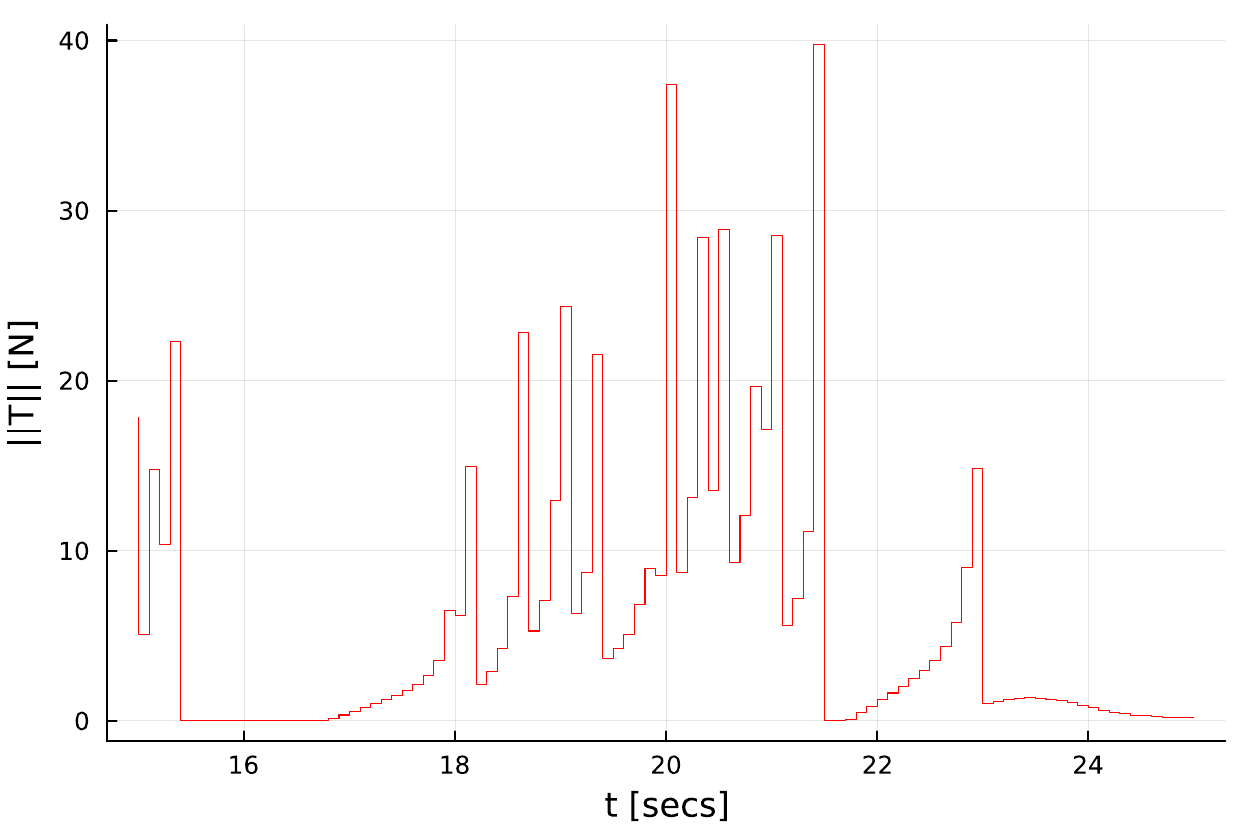}}
\caption{Zoom in on a portion of the thrust magnitude when $\eta=0.99$.}
\label{f:zoom2}
\end{figure}

\subsection{Computational Burden}\label{s:compburd}
The simulation and control allocation algorithm was performed in \verb|Julia|. The quadratic program \eqref{e:QP} was solved using OSQP \cite{osqp}.  

According to \verb|BenchmarkTools.jl|\footnote{https://juliaci.github.io/BenchmarkTools.jl/stable/}, for $\mathcal{N}=4$, Algorithm 1 takes  an average of 1.158 msecs to compute from a trial of 4243 samples (standard deviation 5.396 msecs) on a Macbook Pro laptop with an Intel i5 processor at 2.4 GHz. 

\subsection{Effect of $\eta$ on thrust}
The impulse delivered by the thrust over the length of the maneuver is defined as the following:
\begin{align*}
I_t = \int_0^{t_f}\|T\|dt.
\end{align*}
A lower value of $I_t$ implies less propellant is used in the maneuver. For different values of $\eta\in[0,1]$, the thrust impulse is computed where $t_f=700$ secs, and the results are tabulated in Table \ref{t:impulse}.

\begin{table}[h!]

\caption{Impulse for different values of $\eta$} 
\begin{center}
\begin{tabular}{ |c|c|c| }
\hline
$\eta$ & $I_t$ (kN$\cdot$s)\\
\hline
0.0 & 2.908  \\ 
0.01 & 4.142 \\
0.1 & 10.025 \\
0.25 & 15.98 \\
0.5 & 14.940 \\
0.8 & 6.893 \\
0.9 & 4.625 \\ 
0.96 & 1.867 \\
0.97 & 1.609 \\
0.98 & 0.7664 \\
0.99 & 0.4903 \\
1.0 & $0$\\
\hline
\end{tabular}
\label{t:impulse}
\end{center}
\end{table}

The case where $\eta = 0.0$ is the case where only thruster forces are used. It is interesting to see that it is not the case that adding in Coulomb actuation (i.e. $\eta > 0.0$) necessarily implies a reduction in the propellant usage. In fact a decrease is not seen until $\eta > 0.95$. This implies that the best role of the thrusters is to provide a small amount of compensation for the underactuatedness of the Coulomb forces. For the case of $\eta=0.99$, there is an 83.1\% reduction in propellant compared to using solely thrusters.

No thruster actuation occurs when $\eta=1.0$; however, in this case, the steady state error is noticeably large when $\Delta t=0.1$ secs. See Fig. \ref{f:etaone}. The other values of $\eta$ in Table \ref{t:impulse} have similar steady-state performance to the case  $\eta=0.99$  shown in Fig. \ref{f:traj}. Decreasing $\Delta t$ to 0.01 secs did not have an appreciable effect on the steady-state error. This is possibly a consequence of the lack of continuity of the control law being emulated (see the discussion in \S \ref{s:summary}). Decreasing $\Delta t$ further would be impractical because of the computational burden of Algorithm 1. %From experimentation, it seems as though $\Delta t$ needs to be much smaller than $\Delta t=0.01$ secs when $\eta =1.0$ to get similar steady-state error as the case of $\eta =0.99$. It is difficult to know how small $\Delta t$ needs to be to guarantee that steady-state performance level--it could possibly be on the order of $\mu$secs or less.  Nevertheless, such small sampling times cannot be implemented because of the computational burden of Algorithm 1. %Moreover, recall from \S \ref{s:summary} that there exists a . 

\begin{figure}[h]
\centerline{\includegraphics[width=0.99\columnwidth]{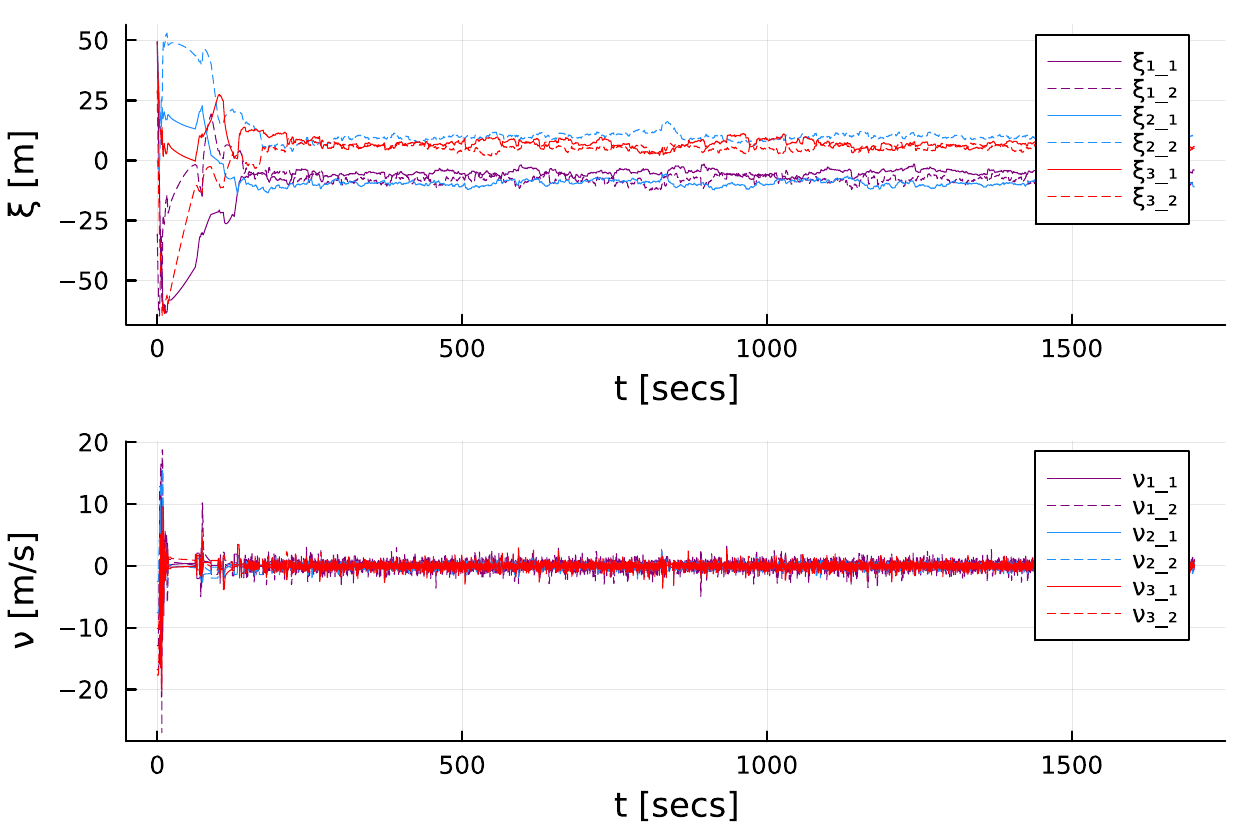}}
\caption{Trajectory when $\eta=1.0$.}
\label{f:etaone}
\end{figure}

The parameter $\eta$ can be varied throughout the maneuver. Therefore, a strategy for further reducing propellant could be to use $\eta=1.0$ until the formation cannot progress further. At that point, a switch to $\eta =0.99$ would occur. In Fig. \ref{f:etasche}, this strategy is implemented where the switch occurs at the 300 secs mark. Using this strategy yields a much better steady-state error performance than  Fig. \ref{f:etaone} and the impulse required is $I_t=0.421164$ kN$\cdot$s, which is smaller than the case where $\eta=0.99$ constantly over the entire maneuver. This is an 85.5\% reduction in propellant compared to solely using thrusters.

\begin{figure}[h]
\centerline{\includegraphics[width=0.99\columnwidth]{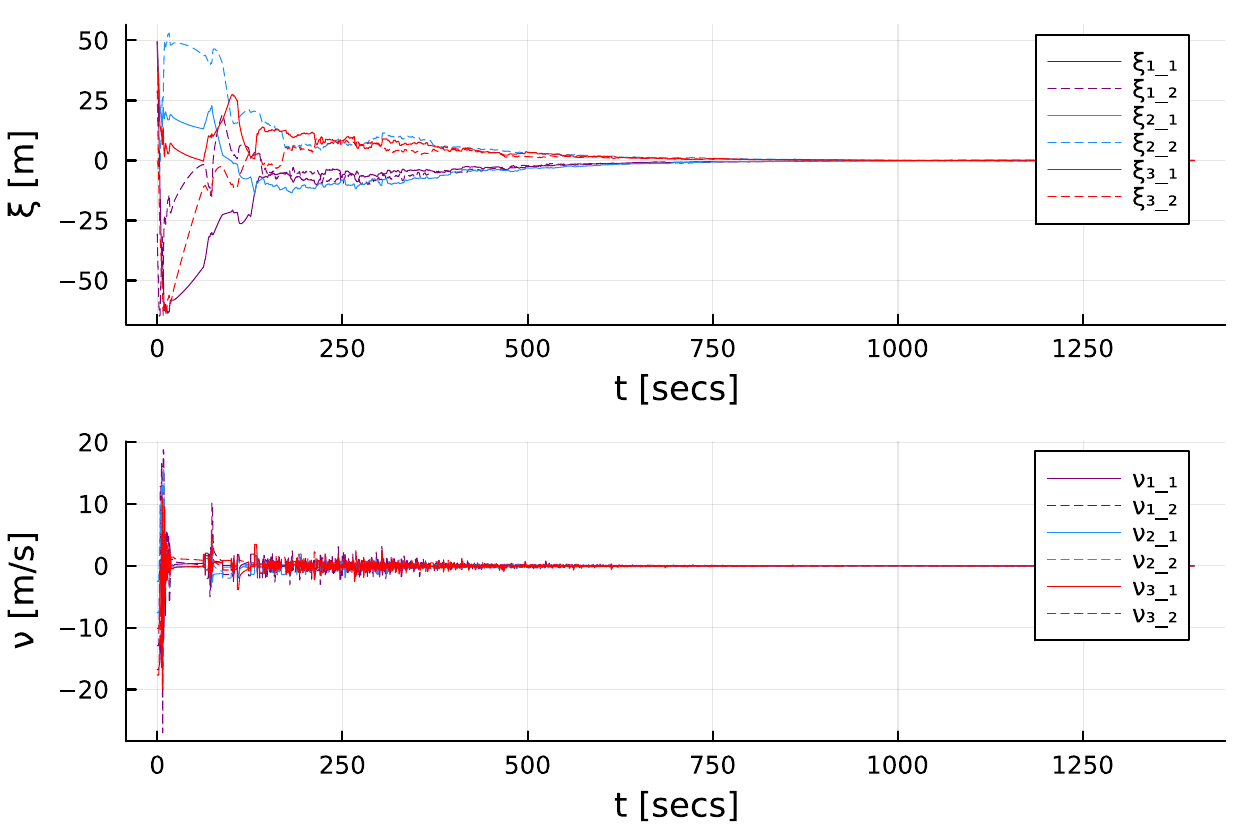}}
\caption{Trajectory when $\eta =1.0$ for the first 300 secs and then $\eta=0.99$ for the remainder of the time. }
\label{f:etasche}
\end{figure}

The simulations discussed above show that $\eta$ is an important design parameter. They show that using a small amount of conventional thrusting is useful which could be because it provides forces that cannot be attained by Coulomb actuation in small corrections.

\section{Conclusions}\label{s:conc}
It was shown in this paper that a HCSF is over-actuated in the sense that any acceleration can be produced by a combination of thrusters and Coulomb forces. Therefore, an acceleration-based CLF is used. Algorithm 1 finds charges and thrusts such that the CLF decreases. A tradeoff parameter $\eta$ was used to designate how much of the CLF decrease should come from the Coulomb forces. The thrusters provide the rest of the CLF decrease. In simulation, by carefully choosing $\eta$, an 85\% reduction in propellant was observed compared to using solely thrusters. 

\bibliography{MLbib}

\end{document}